\def\d{\displaystyle}
\def\wt{\widetilde}
\def\R{{\bf R}}
\def\Z{{\bf Z}}
\def\N{{\bf N}}
\theoremstyle{definition}
\newtheorem{dfn}{Defenition}
\newtheorem{thm}{Theorem}
\newtheorem{lem}{Lemma}
\newtheorem*{rem}{Remark}
\title{Blow-up of solutions for discrete semilinear wave equation with the scale-invariant damping}
\author{
Koji Wada
\footnote{
Department of Mathematics, Hokkaido University
Kita 10, Nishi 8, Kita-Ku, Sapporo, Hokkaido, 060-0810, Japan.
Email address: wada.koji.n4@elms.hokudai.ac.jp}
and
Kyouhei Wakasa
\footnote{
Faculty of Science and Engineering, Muroran Institute of Technology, Muroran 050-8585, Japan.
Email address: wakasa@muroran-it.ac.jp
}
}
\date{}
\begin{document}
\maketitle

\begin{description}
	\item[\bf Keywords:] semilinear wave equation, blow-up of solution, discrete system
	\item[{\bf MSC2020:}] 35B44, 35L71, 39A12
\end{description}

\begin{abstract}
We consider the blow-up problem for discretized scale-invariant nonlinear dissipative wave equations.
It is known that the critical exponents for undiscretized equations (continuous equations) are given by
Fujita and Strauss exponents depending on the space dimensions.
Our purpose is to obtain results for the discretized equations that correspond to those shown for the continuous one.
The proof is based on Matsuya \cite{M2012}, who showed the blow-up problem for discrete semilinear wave equations without dissipative terms,
and we found that the result is sharp in the case of one and two space dimensions compared to the continuous equations.
\end{abstract}

\section{Introduction}
We consider the following initial value problem for the semilinear wave equations with scale-invariant damping :
\begin{equation} \label{eq:1}
\begin{cases}
v_{tt} - \Delta v + \displaystyle\frac{\mu}{1+t} v_{t} = |v|^p,
& (t, x) \in [0, \infty)\times\R^d\\
v(0, x) = \phi(x), \quad
v_t(0, x) = \psi (x)
& x \in\R^d
\end{cases}
\end{equation}
where $d\ge1$, $\mu \ge 0$,\ $p>1$.
There are many contributions to show the global existence and blow-up result for
the solutions to the problem (\ref{eq:1}) according to appropriate conditions for $d$, $\mu$, and $p$.
Heuristically, if $\mu$ is large, we expect that the behaviour of the solutions is similar to semilinear heat equations.
On the other hand, if $\mu$ is small, the behaviour of the solutions is similar to semilinear wave equations.
We underline that there is a conjecture for the threshold concerning $\mu$ that divides these two phenomena.
For the details, see the introduction in \cite{IKTW}. We note that the equations in (\ref{eq:1}) is invariant
under some hyperbolic type scaling for the solutions.

\par
The equation (\ref{eq:1}) becomes the following Klein-Gordon type equations
by using so called Liouville transform
$u(t,x) := (1 + t)^{\mu/2}v(t,x)$:
\begin{equation*}
\begin{cases}
u_{tt} - \Delta u + \displaystyle\frac{\mu(2-\mu)}{4(1+t)^2} u
= \frac{|u|^p}{(1+t)^{\mu(p-1)/2}},
& (t, x) \in [0, \infty)\times\R^d\\
u(0, x) = f(x), \quad
u_t(0, x) = g(x)
& x \in\R^d
\end{cases}
\end{equation*}
where $f(x) := \phi(x)$,\ $g(x) := (\mu/2)\phi(x) + \psi(x)$.
In this paper, we focus on the case where $\mu=2$. Namely, our target is the following initial value problem:
\begin{equation}
\label{IVP-2}
\begin{cases}
\d		u_{tt} - \Delta u
= \frac{|u|^p}{(1+t)^{p-1}},
& (t, x) \in [0, \infty)\times\R^d\\
u(0, x) = f(x), \quad
u_t(0, x) = g(x)
& x \in\R^d
\end{cases}
\end{equation}
where $f(x) := \phi(x)$,\ $g(x) := \phi(x) + \psi(x)$.
It is known that the critical exponent is determined by $p_c(d):=\max\{p_F(d), p_S(d+2)\}$.
Here, $p_F(d)$ is defined by $1+2/d$, so called Fujita exponent,
which is a critical exponent for the semilinear heat equations $u_t-\Delta u=|u|^p$.
On the one hand, $p_S(d)$ is the critical exponent for the semilinear wave equations $u_{tt}-\Delta u=|u|^p$,
which is so called Strauss exponent, defined by a positive root of the quadratic equation $(d-1)p^2-(d+1)p-2=0$.
The first study of the problem (\ref{IVP-2}) was done by D'Abbicco, Lucente, and Reissig \cite{DLR} to show the blow-up result when $1<p\le p_c(d)$ with $d\ge1$.
Global existence result was also shown in the case where $p>p_c(d)$ with $d=2,3$.
Also, D'Abbicco \cite{D2015} has obtained the global existence result for $p>p_c(1)$.
We note that $p_c(1)=3=p_F(1)$, $p_c(2)=2=p_F(2)=p_S(4)$ and $p_c(d)=p_S(d+2)$ for $d\ge3$ holds.
For the case of $1<p\le p_S(d)$ with $d=1,2$,
the lifespan estimates according to the assumptions for the total integral of $u_1(x)$ are also well studied in \cite{Wakasa}, \cite{IKTW}, \cite{KTW}.

Our purpose in this paper is to show results similar to blow-up for initial value probelm with discrete version of (\ref{IVP-2}) for the case where $1<p\le 1+2/d$ with $d\ge1$.
A blow-up result for discrete semilinear wave equations ($\mu=0$ in (\ref{eq:1})) was first studied in Matsuya \cite{M2012}.
This blow-up result was treated for $1<p\le (d+1)/(d-1)(<p_S(d))$ for $d\ge2$ and $1<p<\infty$ for $d=1$, which is a discrete version of Kato \cite{K1980}.
However, blow-up result for the discrete version of (\ref{IVP-2})
was not treated as far as the author's knowledge. We can expect that our result seems to be sharp
when $d=1,2$ due to the global existence results holds for $p>p_c(d)$
in (\ref{IVP-2}).

\section{Discretization of the equations and Main Theorem}
We consider the following initial value problem,
which is a shifted version with respect to time in (\ref{IVP-2}):
\begin{equation} \label{IVP-3}
\begin{cases}
u_{tt} - \Delta u
= \displaystyle\frac{|u|^p}{t^{p-1}},
& (t, x) \in [1, \infty)\times\R^d\\
u(1, x) = f(x), \quad
u_t(1, x) = g(x).
& x \in\R^d
\end{cases}
\end{equation}
Following the result of \cite{M2012}, we consider the discretization for the above equations.
Let $i \in \Z^d$ be the size of space grids,
and non-negative integer $n$ be time steps.
We take any integer $N_0$ and any $h > 0$.
Moreover, we define
\[
u_n^{(i)}:= u(t_n, x_i),
\quad
(t_n, x_i) \in [1, \infty) \times \R^d,
\]
where $u$ is a solution of (\ref{IVP-3}),
$t_n := n / N_0$, and
$x_i := h i \ (\in h\Z^d)$.

A discretization can be replace the time derivatives and the Laplacian by central differences.
We consider the following difference equation, which is a discretization of the equation (\ref{IVP-3}):
\begin{equation}\label{dis-eq}
\begin{cases}
\d \frac{u_{n+1}^{(i)} - 2u_{n}^{(i)} + u_{n-1}^{(i)}}{\delta^2}
= \d \sum_{j=1}^d \frac{u_{n}^{(i - e_j)} - 2u_{n}^{(i)} + u_{n}^{(i + e_j)}}{h^2} + \frac{|u_{n}^{(i)}|^p}{(t_n)^{p-1}},\\
u_{N_0}^{(i)} = f(x_i), \quad
u_{N_0+1}^{(i)} = u_{N_0}^{(i)} + \delta g(x_i),
\end{cases}
\end{equation}
where $\delta = 1 / N_0$,
and $e_j$ is the unit vector whose $j$-th component is 1 and other components are 0.
Here, we set $u_{N_0}^{(i)} := u(1, h i)$ and $u_{N_0 + 1}^{(i)}:= u(1 + \delta, h i)$.


However, if $u_n^{(i)},u_{n-1}^{(i)}$ are finite value for any $i \in \Z^d$,
then $u_{n+1}^{(i)}$ is also finite value,
so the solution does not blows up in finite time.
To look for the blow-up behavior in the discrete sense, we consider
the following initial value problem which approximation of (\ref{dis-eq}):
\begin{equation}\label{eq:4}
\begin{cases}
\d u^{(i)}_{n+1}
= \frac{\delta^2}{h^2} v_n^{(i)} +  \left(2 - 2d \frac{\delta^2}{h^2}\right)u_n^{(i)} - u_{n - 1}^{(i)} \\
\d \qquad \qquad\qquad+ \delta^{2-p} |u_n^{(i)}|^{p-1}\tan\left(\delta \frac{|u_n^{(i)}|}{n^{p-1}}\right),
& n > N_0, i \in \Z^d\\
u_{N_0}^{(i)} = f(x_i), \quad
u_{N_0 + 1}^{(i)} = u_{N_0}^{(i)} + \delta g(x_i).
& i \in \Z^d
\end{cases}
\end{equation}

The definitions of blow-up of solution for (\ref{eq:4}) is the following:
\begin{dfn}\label{dfn:3}
Let $u_n^{(i)}$ be a solution of  {\rm (\ref{eq:4})}.
There exists an integers $N_b \in \N$ with $N_b\ge N_0$,
and a vector $i_b \in \Z^d$
such that $|u_{N_b}^{(i_b)}|\ge \d \frac{\pi}{2}\delta^{-1}N_b^{p-1}$,
then we say that the solution $u_n^{(i)}$ blows up in finite time.
\end{dfn}


From this definition, we have the following theorem.
\begin{thm}\label{thm1}
Let $d\ge1$, and let $\d 1 < p \leq 1 + 2/d$.
Assume that
\begin{gather}
\label{supp}
\{i \in \Z^d : u_{n}^{(i)} \neq 0,\ n = N_0, N_0+1\}
\subset \{i \in \Z^d : ||i||_1 \leq R\},\\
\d \sum_{i \in \Z^d}u_{N_0}^{(i)} \geq 0
\notag
\end{gather}
hold for any $R \in \N$,
where $i=(i_1,i_2,\ldots,i_d) \in \Z^d$ and $||i||_1 := |i_1| + |i_2| + \cdots + |i_d|$.
Moreover, we assume that there exists a positive constant $I$ such that
\begin{equation}
\sum_{i \in \Z^d}(u_{N_0+1}^{(i)}-u_{N_0}^{(i)}) > I.
\end{equation}
Then the solution for (\ref{eq:4}) blows up in finite time.
\end{thm}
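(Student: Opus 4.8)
The plan is to argue by contradiction through the discrete ``spatial integral'' $F_n := \sum_{i \in \Z^d} u_n^{(i)}$, mimicking the Kato--Matsuya functional method. Note first that by (\ref{supp}) and the nearest-neighbour structure of (\ref{eq:4}) each $u_n$ has finite support, so $F_n$ is a well-defined finite sum. Summing (\ref{eq:4}) over all $i \in \Z^d$ and using the translation invariance of the sum, the neighbour sum satisfies $\sum_i v_n^{(i)} = 2d\,F_n$, so the discrete Laplacian terms cancel exactly and one is left with
\[
F_{n+1} - 2F_n + F_{n-1} = \delta^{2-p}\sum_{i \in \Z^d} |u_n^{(i)}|^{p-1}\tan\!\left(\delta\frac{|u_n^{(i)}|}{n^{p-1}}\right).
\]
Assuming the solution does \emph{not} blow up, every argument of the tangent lies in $[0,\pi/2)$, so the right-hand side is nonnegative; hence $F_n$ is convex in $n$. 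Combined with the hypothesis $F_{N_0+1}-F_{N_0} = \sum_i (u_{N_0+1}^{(i)}-u_{N_0}^{(i)}) > I$ and $F_{N_0} = \sum_i u_{N_0}^{(i)} \ge 0$, convexity yields the linear lower bound $F_n \ge I\,(n-N_0)$ for all $n \ge N_0$.

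Next I would control the support. Since $u_{n+1}^{(i)}$ depends only on $u_n^{(i)}$, its nearest neighbours $u_n^{(i\pm e_j)}$, and $u_{n-1}^{(i)}$, the support spreads by at most one unit in $\|\cdot\|_1$ per step; by (\ref{supp}) and induction the support at step $n$ lies in $\{\|i\|_1 \le R+(n-N_0)\}$, containing $M_n \le C n^d$ lattice points for $n$ large. Using $\tan\theta \ge \theta$ on $[0,\pi/2)$ and the discrete Jensen inequality $\sum_i |u_n^{(i)}|^p \ge (\sum_i |u_n^{(i)}|)^p M_n^{1-p} \ge F_n^{\,p}\,M_n^{1-p}$ (valid since $F_n \ge 0$), the identity above closes into the key difference inequality
\[
F_{n+1} - 2F_n + F_{n-1} \;\ge\; c\,\frac{F_n^{\,p}}{n^{(d+1)(p-1)}}, \qquad n \ge n_1,
\]
for some $c>0$. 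Conversely, the non-blow-up hypothesis $|u_n^{(i)}| < \frac{\pi}{2}\delta^{-1} n^{p-1}$ together with the support bound yields the polynomial ceiling $F_n \le C' n^{\,p-1+d}$. The strategy is to show the difference inequality forces $F_n$ to outgrow this ceiling.

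For the subcritical range $1 < p < 1+2/d$ I would bootstrap. Summing the difference inequality twice turns a lower bound $F_n \ge A\,n^{a}$ (with $ap-(d+1)(p-1)>-1$) into $F_n \ge A'\,n^{a'}$ where $a' = pa - (d+1)(p-1) + 2$. The affine map $a \mapsto a'$ has slope $p>1$ and fixed point $a^* = \frac{(d+1)(p-1)-2}{p-1}$, which satisfies $a^*<1$ precisely when $p<1+2/d$; starting from $a_0=1>a^*$ the iterates $a_k$ therefore increase to $+\infty$. Fixing a \emph{finite} $k$ with $a_k > p-1+d$ produces, for all large $n$, a lower bound $A_k n^{a_k}$ exceeding the ceiling $C' n^{p-1+d}$ --- a contradiction, so the solution must blow up.

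The main obstacle is the critical exponent $p = 1+2/d$, where $a^* = 1 = a_0$ and the polynomial bootstrap stalls. Here $(d+1)(p-1) = 2+2/d$, and inserting the linear bound gives $F_{n+1}-2F_n+F_{n-1} \gtrsim n^{-1}$, which sums twice to the logarithmic gain $F_n \gtrsim n\log n$. Iterating the logarithmic bootstrap $F_n \ge C_j\, n(\log n)^{b_j}$ produces $b_{j+1} = p\,b_j + 1 \to \infty$; the delicate point is to track the constants $C_j$ (which decay like $\kappa^{p^j}$) simultaneously with $b_j \sim p^j$, so that once $n$ is large enough one has $\log\!\big(C_j\, n(\log n)^{b_j}\big) \to +\infty$ as $j\to\infty$. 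This forces $F_n = +\infty$ at a finite step $n$, contradicting the finiteness $F_n \le C' n^{p-1+d}$ guaranteed by non-blow-up, and closes the critical case. Carrying out this logarithmic iteration with explicit, summable control of the constants is the technical heart of the argument.
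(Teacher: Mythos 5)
Your setup coincides with the paper's own: the same functional $U_n=\sum_{i}u_n^{(i)}$, the same cancellation of the discrete Laplacian under summation (indeed $\sum_i v_n^{(i)}=2d\,U_n$), the same convexity/linear lower bound $U_n\ge I(n-N_0)$, the same H\"older-plus-support-counting step giving $U_{n+1}-2U_n+U_{n-1}\ge c\,n^{-(d+1)(p-1)}U_n^p$, and the same non-blow-up ceiling $U_n\le C'n^{p+d-1}$. Where you diverge is the finishing mechanism. Your subcritical bootstrap ($1<p<1+2/d$) is sound and arguably more elementary than the paper's: the exponent map $a\mapsto pa-(d+1)(p-1)+2$ has fixed point $a^*=(d+1)-2/(p-1)<1$, so finitely many iterations from $a_0=1$ overtake the ceiling exponent $p+d-1$, and with only finitely many steps the constants and validity thresholds cause no trouble.

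The genuine gap is the critical case $p=1+2/d$ --- precisely the case that makes the theorem sharp for $d=1,2$ --- which you sketch and explicitly defer (``the technical heart''). The difficulty is not just bookkeeping of $C_j\sim\kappa^{p^j}$: in an iteration $U_n\ge C_j\,n(\log n)^{b_j}$, each double summation yields a bound valid only for $n$ beyond a \emph{new} threshold $n_{j+1}\ge n_j$ (e.g., the usual trick of summing over $m\in[n/2,n]$ to extract the factor $n$, or of discarding the subtracted term $(\log\bar n)^{pb_j+1}$, enlarges the threshold at every step). If $n_j\to\infty$, you can never evaluate the full sequence of bounds at one fixed $n$ and let $j\to\infty$, so ``$U_n=+\infty$ at a finite step'' does not follow. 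To close this you must run the iteration with a threshold independent of $j$, e.g. propagate bounds of the form $C_j\,n\bigl(\log(n/\bar n)\bigr)^{b_j}$ for all $n>\bar n$ with one fixed base point $\bar n$, check that double summation reproduces exactly this form, and absorb the resulting $2^{-b_j}$-type losses into $\log C_{j+1}\ge p\log C_j-O(p^{j})$; none of this is in the proposal. It is worth noting how the paper sidesteps the infinite iteration entirely: it uses the logarithmic gain only once, obtaining $U_n\ge \frac{C_2}{3}n\log n$ as in \eqref{eq:9}, then introduces the discrete energy \eqref{eq:10} whose monotonicity converts the second-order inequality \eqref{eq:7-3} into the first-order one \eqref{eq:11}; combining the two gives $U_{n+1}\ge(1+C_3/n)U_n$ with $C_3\ge p+d$ for large $n$, hence $U_n\gtrsim n^{p+d}$, contradicting \eqref{asm-proof}. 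Since the reduction $-(d+1)(p-1)\ge-(p+1)$ is the only place $p\le 1+2/d$ enters, that single argument covers the subcritical and critical ranges at once; adapting it, or completing the uniform-threshold iteration described above, is what your critical case still needs.
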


\begin{rem}
When $d=1$ or $2$, our result is expected be sharp because the global result holds for $p>p_c(d)$.
See the references \cite{D2015} and \cite{DLR}.
\end{rem}



\section{Preliminaries}
First of all,
we prepare the following two lemmas:

\begin{lem}\label{lem:1}
Let $\#A$ be the number of elements in set $A$.
For any $d, R \in \N$, we have
\begin{gather*}
\#\{i \in \Z^d : ||i||_1 \leq R\} \leq 2^{2d+1}R^{d}.
\end{gather*}
\end{lem}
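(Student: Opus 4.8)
The plan is to bound the $\ell^1$-ball by the smallest coordinate cube that contains it and then apply a crude numerical estimate; the stated constant $2^{2d+1}$ is generous enough that no exact lattice-point count is needed. First I would record the elementary inequality
\[
\max_{1\le j\le d}|i_j| \;\le\; |i_1| + \cdots + |i_d| \;=\; \|i\|_1,
\]
valid for every $i=(i_1,\ldots,i_d)\in\Z^d$. Consequently, if $\|i\|_1 \le R$ then each coordinate satisfies $|i_j|\le R$, that is $i_j \in \{-R,-R+1,\ldots,R\}$, which yields the inclusion
\[
\{i\in\Z^d : \|i\|_1 \le R\} \;\subseteq\; \{-R,-R+1,\ldots,R\}^d.
\]

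Since cardinality is monotone under inclusion and the cube on the right is a Cartesian product of $d$ copies of a set with exactly $2R+1$ elements, I obtain
\[
\#\{i\in\Z^d : \|i\|_1 \le R\} \;\le\; (2R+1)^d.
\]
Finally, for $R\ge1$ one has $2R+1 \le 4R$ (equivalently $2R\ge1$), so
\[
(2R+1)^d \;\le\; (4R)^d \;=\; 4^d R^d \;=\; 2^{2d}R^d \;\le\; 2^{2d+1}R^d,
\]
which is the claimed bound.

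There is essentially no obstacle here: the only real decision is to discard the exact enumeration of lattice points in the cross-polytope, namely $\sum_{k=0}^{d} 2^k\binom{d}{k}\binom{R}{k}$, which would give a sharper constant, in favor of the one-line cube estimate that already closes the gap with a spare factor of $2$. The sole place where the hypothesis $R\in\N$ (so $R\ge1$) is used is the final inequality $2R+1\le 4R$, which fails at $R=0$; an induction on $d$ is possible but less clean, because the inner radius $R-|i_d|$ can drop to $0$ and must be handled separately, whereas the containment argument sidesteps this entirely.
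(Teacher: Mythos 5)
Your proof is correct, but it takes a genuinely different route from the paper. The paper decomposes the $\ell^1$-ball into shells $\{\|i\|_1=j\}$, invokes (without proof) the shell estimate $\#\{i:\|i\|_1=j\}\le 2^{2d}j^{d-1}$, and sums over $j=0,\ldots,R$ to get $1+2^{2d}R^d\le 2^{2d+1}R^d$. You instead embed the ball in the coordinate cube $\{-R,\ldots,R\}^d$ via $\max_j|i_j|\le\|i\|_1$, count the cube exactly as $(2R+1)^d$, and finish with $2R+1\le 4R$ for $R\ge1$. Your argument is fully self-contained (the paper's shell bound is true but asserted, and would itself need an argument such as $\sum_k 2^k\binom{d}{k}\binom{j-1}{k-1}\le 3^d j^{d-1}\le 2^{2d}j^{d-1}$), and it even lands on the sharper constant $2^{2d}$ before relaxing to the stated $2^{2d+1}$. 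What the paper's shell decomposition buys is finer information (a bound on each sphere separately, of the form $j^{d-1}$ rather than $R^{d-1}$), but none of that is needed for the lemma as stated, and for this purpose your cube containment is the cleaner proof. You are also right to flag that $R\in\N$ (so $R\ge1$) is genuinely needed: both your final inequality and the paper's last step $1+2^{2d}R^d\le 2^{2d+1}R^d$ fail at $R=0$.
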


\begin{proof}
Noticing that
\begin{align*}
\#\{i \in \Z^d : ||i||_1 \leq R\}
&=\sum_{j=0}^{R}\#\{i \in \Z^d : ||i||_1 = j\}\\
&= 1 + \sum_{j=1}^{R}\#\{i \in \Z^d : ||i||_1 = j\},
\end{align*}
we have
\begin{align*}
\#\{i \in \Z^d : ||i||_1 \leq R\}
&\leq 1 + \sum_{j=1}^{R} 2^{2d}j^{d-1}
\leq 1 + \sum_{j=1}^{R} 2^{2d}R^{d-1}\\
&= 1 + 2^{2d}R^{d}
\leq 2^{2d+1}R^{d}.
\end{align*}
\end{proof}

\begin{lem}\label{lem:3}
Let $N_0$ be a positive integer.
Assume that a sequence $\{U_n\}_{n = N_0}^{\infty}$ satisfies
\begin{equation}
\label{ind-U1}
U_{n + 1} - 2U_n + U_{n - 1} \geq 0
\end{equation}
with $n> N_0$, and suppose that
\begin{equation}
\label{initial_U}
U_{N_0+1} - U_{N_0} \ge I,\quad U_{N_0} \geq 0,
\end{equation}
where $I > 0$ is a constant.
Then, we have
\begin{equation}
\label{lin-U_t}
U_n - U_{n-1}> 0 \quad\mbox{and}\quad U_n \geq I(n - N_0)
\end{equation}
for any $n > N_0$.

Moreover, if there exists integer $\wt{N} > N_0$ such that
\begin{equation}
\label{ind-U2}
U_{n + 1} - 2U_n + U_{n - 1} \geq \frac{C}{n}
\end{equation}
for any $n \geq \wt{N}$,
where $C$ is a constant,
then, we have
\begin{equation}
\label{ite-2}
U_n \geq \frac{C}{3} n\log n
\end{equation}
for any $n > N$,
where we set $N := \max\{\wt{N}^3,\ e^3\}$.
\end{lem}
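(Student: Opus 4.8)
The plan is to reduce everything to the behaviour of the first differences $D_n := U_n - U_{n-1}$. The hypothesis \er{ind-U1} is precisely the statement that $D_{n+1} - D_n \ge 0$ for $n > N_0$, i.e. $\{D_n\}$ is non-decreasing, while \er{initial_U} gives $D_{N_0+1} \ge I > 0$ and $U_{N_0} \ge 0$. From monotonicity I immediately get $D_n \ge D_{N_0+1} \ge I$ for every $n > N_0$, which is the first assertion in \er{lin-U_t}; summing $U_n = U_{N_0} + \sum_{k=N_0+1}^n D_k$ and discarding $U_{N_0}\ge0$ then yields $U_n \ge I(n-N_0)$, the second assertion.

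For the ``moreover'' part I may assume $C>0$: when $C \le 0$ the claimed bound \er{ite-2} is non-positive, while $U_n \ge I(n-N_0) > 0$ already holds for $n > N > N_0$, so there is nothing to prove. Assuming $C>0$, I first convert \er{ind-U2} into a lower bound for $D_n$ by telescoping: summing $D_{k+1}-D_k \ge C/k$ over $\widetilde{N} \le k \le n-1$ gives
\[
D_n \ge D_{\widetilde{N}} + C\sum_{k=\widetilde{N}}^{n-1}\frac{1}{k}
\ge C\sum_{k=\widetilde{N}}^{n-1}\frac{1}{k}
\ge C\log\frac{n}{\widetilde{N}},
\]
where the last step uses $1/k \ge \int_k^{k+1} dx/x$ and $D_{\widetilde{N}}\ge0$ from the first part.

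Next I sum this over $m$ to recover $U_n$. Since $U_{N_0}\ge0$ and every $D_m>0$, for $n>\widetilde{N}$ I have
\[
U_n \ge \sum_{m=\widetilde{N}}^n D_m
\ge C\sum_{m=\widetilde{N}}^n \log\frac{m}{\widetilde{N}}
\ge C\int_{\widetilde{N}}^n \log\frac{x}{\widetilde{N}}\,dx
= C\Big(n\log\frac{n}{\widetilde{N}} - n + \widetilde{N}\Big),
\]
the middle inequality being the integral comparison for the increasing, non-negative function $x \mapsto \log(x/\widetilde{N})$ on $[\widetilde{N},n]$. Dropping $\widetilde{N}\ge0$ leaves $U_n \ge C\big(n\log(n/\widetilde{N}) - n\big)$.

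The final step is where the two thresholds in $N=\max\{\widetilde{N}^3, e^3\}$ enter, and this is the only delicate point. For $n>\widetilde{N}^3$ one has $\log\widetilde{N} < \tfrac13\log n$, hence $\log(n/\widetilde{N}) > \tfrac23\log n$; and for $n>e^3$ one has $\log n>3$, hence $n < \tfrac13 n\log n$. Combining these (and using $C>0$ to preserve the inequalities),
\[
U_n \ge C\Big(n\log\frac{n}{\widetilde{N}} - n\Big)
> C\Big(\tfrac23 n\log n - \tfrac13 n\log n\Big)
= \frac{C}{3}\,n\log n,
\]
which is \er{ite-2}. The main thing to get right is this bookkeeping of constants: the cube in $\widetilde{N}^3$ is exactly what turns $\log(n/\widetilde{N})$ into the fixed fraction $\tfrac23$ of $\log n$, while the $e^3$ is exactly what lets the linear term $-n$ be absorbed into $\tfrac13 n\log n$, the two together producing the coefficient $\tfrac13$.
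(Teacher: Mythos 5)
Your proof is correct and follows essentially the same route as the paper's: telescope the second-difference inequality to bound $U_n - U_{n-1}$ by a harmonic sum, sum again, compare with integrals, and use the two thresholds $\widetilde{N}^3$ and $e^3$ to absorb the $-Cn\log\widetilde{N}$ and $-Cn$ error terms into $\frac{2C}{3}n\log n$. The only differences are cosmetic — you apply the integral comparison before the second summation (then compare the sum of logarithms with $\int\log(x/\widetilde{N})\,dx$), whereas the paper exchanges the order of a double sum and applies a single integral bound at the end — plus your explicit handling of the trivial case $C\le 0$, which the paper passes over in silence.
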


\begin{proof}
Since $U_{n+1}-U_{n} \ge U_{n} - U_{n - 1}$ by (\ref{ind-U1}),
we obtain
\begin{gather*}
U_{n + 1} - U_{n}
\geq U_{n} - U_{n - 1}
\geq \cdots
\geq U_{N_0+1} - U_{N_0} > I>0,
\end{gather*}
which gives us the first inequality in (\ref{lin-U_t}).
From $U_{n} \ge U_{n - 1} + I$, we get
\begin{align*}
U_n
&\geq U_{n - 1} + I\\
&\geq U_{n - 2} + I + I\\
&\geq \cdots\\
&\geq U_{N_0} + I(n - N_0)
\geq I(n - N_0).
\end{align*}
Hence, we have (\ref{lin-U_t}).

Next, we move on to the proof of (\ref{ite-2}).
Since
\begin{gather*}
(U_{n} - U_{n - 1}) - (U_{n - 1} - U_{n - 2})
\geq \frac{C}{n - 1}
\end{gather*}
and the first inequality in (\ref{lin-U_t}), we inductively get
\begin{align*}
U_n - U_{n-1}
&\geq U_{n-1} - U_{n - 2} + \frac{C}{n-1}\\
&\geq U_{\wt{N}} - U_{\wt{N}-1} + \left(\frac{C}{n-1} + \frac{C}{n - 2}+ \cdots + \frac{C}{\wt{N}}\right)
\ge C\sum_{m=\wt{N}}^{n - 1}\frac{1}{m}.
\end{align*}
It follows from $\d U_n \geq U_{n-1} + C\sum_{m=\wt{N}}^{n - 1}\frac{1}{m}$ and $U_{\wt{N}} \ge 2I\wt{N}$ by (\ref{lin-U_t}) that
\begin{align*}
U_n
&\geq U_{\wt{N}}  + C\sum_{l=\wt{N}}^{n-1}\left(\sum_{m=\wt{N}}^l\frac{1}{m}\right)\\
&= U_{\wt{N}} + C\sum_{m=\wt{N}}^{n-1 }\frac{n-m}{m}\\
&\ge 2I\wt{N} - C(n-\wt{N})  + Cn \sum_{m=\wt{N}}^{n-1}\frac{1}{m}.
\end{align*}
Since $\d \sum_{m=\wt{N}}^{n-1}\frac{1}{m}\ge \int_{\wt{N}}^{n}\!\frac{1}{x}dx=\log n-\log \wt{N}$, we have
\[
U_{n}\geq 2I\wt{N} - Cn+ C\wt{N}+ Cn\log n-Cn\log\wt{N}.
\]
Noticing that
\[
Cn\log n =\left(\frac{C}{3}+\frac{C}{3}+\frac{C}{3}\right)n\log n\ge \frac{C}{3}n \log n + Cn +Cn\log\wt{N}
\]
for $n > N$, we obtain (\ref{ite-2}).
This completes the proof.
\end{proof}

\section{Proof of Theorem 1}

\begin{proof}
We shall show the proof by contradiction.
Assume that
\begin{equation*}
|u_n^{(i)}| < \frac{\pi}{2}\delta^{-1}n^{p-1},
\quad\mbox{for}\quad
i \in \Z^d,\ n \geq N_0.
\end{equation*}
Also, we define
\begin{gather}
\label{sum-u}
U_n := \sum_{i \in \Z^d}u_n^{(i)},\\
\label{diam}
A_n := \{i \in \Z^d : ||i||_1 \leq R + n\},
\end{gather}
where $R$ is the one in (\ref{supp}).
Making use of Lemma \ref{lem:1},
we get
\begin{gather*}
	U_n \leq \frac{\pi}{2}\delta^{-1}n^{p-1} 2^{2d + 1}(R + n)^d,
	\quad\mbox{for}\quad
	n \geq N_0.
\end{gather*}
So, for any $n \geq N_1 := \max\{N_0, R\}$,
\begin{gather}
	\label{asm-proof}
	U_n \leq \frac{\pi}{2}\delta^{-1}2^{3d + 1}n^{p + d - 1}.
\end{gather}
Here, we note that,
for any $n \geq N_0$,
\begin{gather*}
\{ i \in \Z^d : ||i||_1 > R + n\}
\subset
\{ i \in \Z^d : u_n^{(i)} = 0\}
\end{gather*}
holds by using the inductions for the equations.

From the assumptions of Theorem \ref{thm1}, we note that $U_{N_0+1}-U_{N_0}>I$ and $U_{N_0}>0$ hold.
Also, we get
\begin{equation}
\label{eq:0}
U_{n + 1} - 2U_{n} + U_{n - 1}
=
\delta^{2-p}\sum_{i \in A_n} |u_n^{(i)}|^{p-1}\tan\left(\delta\frac{|u_n^{(i)}|}{n^{p-1}}\right)
\geq 0
\end{equation}
for any $n > N_0$ from (\ref{dis-eq}).
Thus, we have
\begin{gather}\label{eq:8}
U_n \geq I (n - N_0)
\end{gather}
for $n > N_0$ by Lemma \ref{lem:3}.

Making use of H\"{o}lder's inequality, we have
\[
|U_n|
\leq \left(\sum_{i \in \Z^d}|u_n^{(i)}|\cdot 1\right)
\leq \left(\sum_{i \in \Z^d}|u_n^{(i)}|^p\right)^{1/p}\left(\sum_{i \in A_n}1^{(p-1)/p}\right)^{(p-1)/p}
\]
which implies
\[
|U_n|^p \leq \left(\sum_{i \in \Z^d}|u_n^{(i)}|^p\right)2^{(2d+1)(p-1)}(R + n)^{d(p-1)}
\]
by Lemma \ref{lem:1}.
Since
\begin{gather*}
|U_n|^p
\leq 2^{(3d+1)(p-1)}\left(\sum_{i \in \Z^d}|u_n^{(i)}|^p\right)n^{d(p-1)}
\end{gather*}
holds for $n > N_1$,
we obtain the following inequality
\begin{gather*}
\sum_{i \in \Z^d}|u_n^{(i)}|^p
\geq  2^{-(3d+1)(p-1)} n^{-d(p-1)}(U_n)^p.
\end{gather*}
By virtue of (\ref{eq:0}) and $\tan x \geq x$ for any $x \geq 0$,
we get
\begin{align}
\label{eq:7-2}
U_{n + 1} - 2U_{n} + U_{n - 1}
\ge \delta^{3-p}2^{-(3d+1)(p-1)} n^{-(d+1)(p-1)}(U_n)^p
\end{align}
for $n > N_1$.
Here we note that $p \leq 1 + 2/d$ is equivalent to
\[
-(d+1)(p-1) \geq -(p+1),
\]
the inequality (\ref{eq:7-2}) implies that
\begin{equation}
\label{eq:7-3}
U_{n + 1} - 2U_{n} + U_{n - 1}
\ge C_1 n^{-(p+1)}(U_n)^p
\end{equation}
for $\tau > N_1$ where $C_1$ is a constant.
Noticing that
\[
U_n \ge I(n - N_0)=
\frac{I}{2}n + I\left(\frac{n}{2} - N_0\right)
\ge \frac{I}{2}n
\]
for $n > 2N_0$, (\ref{eq:8}) implies that
\begin{equation}\label{eq:8-2}
U_n \ge \frac{I}{2}n
\end{equation}
for $n > 2N_0$.
Making use of (\ref{eq:8-2}), we obtain
\begin{gather*}
	U_{n + 1} - 2U_n + U_{n - 1}
	\geq \frac{C_2}{n}
\end{gather*}
for $n > N_2$
where $N_2 = \max\{2N_0, R\}$ and $C_2$ is a constant.
Therefore, from (\ref{ind-U2}) in Lemma \ref{lem:3} with $\wt{N} = N_2$, we get
\begin{gather}\label{eq:9}
U_n \geq \frac{C_2}{3}n \log n
\end{gather}
for $n > N_3$, where $N_3 = \max\{(N_2)^3,e^3\}$.

Moreover,
we consider
\begin{gather}
E_{n} := (U_{n} - U_{n - 1})^2 -\frac{C}{p+1}\left(\frac{U_{n - 1}}{n - 1}\right)^{p+1}
\quad
n > N_2
\label{eq:10}
\end{gather}
for any sufficiently small positive number $C$ (see the assumption (\ref{asm:C-1}), (\ref{asm:C-2}) later). Then (\ref{eq:10}) gives us
\begin{align*}
E_{n + 1} - E_{n}
&= (U_{n + 1} - U_{n})^2 - (U_{n} - U_{n-1})^2\\
&\hspace{100pt} - \frac{C}{p+1}\left\{\left(\frac{U_{n}}{n}\right)^{p+1} - \left(\frac{U_{n - 1}}{n - 1}\right)^{p+1}\right\}\\
&=  (U_{n + 1} - U_{n - 1})(U_{n + 1} - 2U_{n} +  U_{n - 1})\\
&\hspace{100pt} - \frac{C}{p+1}\left\{\left(\frac{U_{n}}{n}\right)^{p+1} - \left(\frac{U_{n - 1}}{n - 1}\right)^{p+1}\right\}.
\end{align*}
It follows from (\ref{eq:7-3}) that
\begin{align*}
E_{n + 1} - E_{n}
&\geq C_1 \left(\frac{U_n}{n}\right)^{p+1}\left(\frac{U_{n + 1}}{U_{n}} - \frac{U_{n - 1}}{U_{n}}\right)\\
&\hspace{100pt} - \frac{C}{p+1}\left\{\left(\frac{U_{n}}{n}\right)^{p+1} - \left(\frac{U_{n - 1}}{n}\right)^{p+1}\right\}\\
&\geq  C\left(\frac{U_{n}}{n}\right)^{p+1}\left(1 - \frac{U_{n - 1}}{U_{n}}-\frac{1}{p+1}\left\{1 - \left(\frac{U_{n - 1}}{U_{n}}\right)^{p+1}\right\}\right).
\end{align*}
We take $C>0$ so small that
\begin{equation}
\label{asm:C-1}
C \le C_1.
\end{equation}
Since $U_{n}$ is monotonically increasing in $n$, we have
\[
E_{n + 1} - E_{n}\ge C\left(\frac{U_n}{n}\right)^{p+1}\left(1 - \frac{U_{n - 1}}{U_{n}}-\frac{1}{p+1}\left\{1 - \left(\frac{U_{n - 1}}{U_{n}}\right)^{p+1}\right\}\right)
\]
for $n > N_2$.
Making use of $1 - \lambda - (1/(p+1))(1 - \lambda^{p+1}) \geq 0$ for $\lambda \in [0, 1]$, we get $E_{n + 1} - E_{n} \geq 0$ for $n > N_2$.

From (\ref{eq:10}), we obtain
\begin{align*}
E_{n+1} - E_{N_2 + 1}
&= (U_{n+1} - U_{n})^2 - (U_{N_2 + 1} - U_{N_2})^2\\
& \hspace{100pt}
- \frac{C}{p+1}\left\{\left(\frac{U_{n}}{n}\right)^{p+1} - \left(\frac{U_{N_2}}{N_2}\right)^{p+1}\right\}
\geq 0.
\end{align*}
Again, we take $C$ in (\ref{eq:10}) small enough so that
\begin{equation}
\label{asm:C-2}
(U_{N_2+1} - U_{N_2})^2 \geq \frac{C}{p+1}\left(\frac{U_{N_2}}{N_2}\right)^{p+1}.
\end{equation}
It then follows that
\begin{equation}
\label{eq:11}
(U_{n + 1} - U_{n})^2 \geq \frac{C}{p+1}\left(\frac{U_{n}}{n}\right)^{p+1}
\end{equation}
for $n > N_2$.

Let $n > N_3$.
Since (\ref{eq:11}) and (\ref{eq:9}), we get
\begin{align*}
U_{n + 1} - U_{n}
&\geq \left\{\frac{C}{p+1}\left(\frac{U_{n}}{n}\right)^{p+1}\right\}^{1/2}\\
&= \left(\frac{C}{p+1}\right)^{1/2} \left(\frac{U_{n}}{n}\right)^{(p-1)/2}\left(\frac{U_{n}}{n}\right)\\
&\geq \left(\frac{C}{p+1}\right)^{1/2} \left(\frac{C_2}{3} \log n\right)^{(p-1)/2}\left(\frac{U_{n}}{n}\right).
\end{align*}

Furthermore,
we take a positive $N_4 \in \Z$ sufficiently large enough to satisfy
\begin{gather*}
C_3 :=
	\left(\frac{C}{p+1}\right)^{1/2} \left(\frac{C_2}{3} \log N_4\right)^{(p-1)/2}\geq p + d.
\end{gather*}
Then, we obtain
\begin{align*}
U_n
&\geq \prod_{k=N_4}^{n - 1}\frac{k + C_3}{k}U_{N_4}\\
&= \frac{n \times \cdots \times (n + p+d -1)}{N_4 \times \cdots \times (N_4 + p+d -1)}
\times\frac{(N_4 + C_3) \times \cdots\times (n - 1 + C_3)}{(N_4 + p+d)\times \cdots \times(n + p+d -1)}U_{N_4}\\
&\geq \prod_{k=0}^{p+d-1}\frac{n + k}{N_4 + k}U_{N_4}
\geq \left(\frac{n}{N_4 + p+d - 1}\right)^{p+d} U_{N_4}\\
&= \frac{U_{N_4}}{(N_4 + p+d - 1)^{p+d}} n^{p+d}
\end{align*}
for $n > N_b$ where $N_b := \max\{N_3, N_4\}$.
Therefore,
we have a contradiction to the assumptions (\ref{asm-proof})
by taking large enough $n$.
\end{proof}

\if0

\section{Numerical Example}
In this section,
we show the results of numerical simulations using the discrete semilinear wave equation (\ref{dis-eq}), where $d = 1$.
Let $u_0^{(i)} = 0$, $u_1^{(i)} = 10^{-6}$ for any $i \in \Z$, $\delta = 10^{-3}$ in figure 1.
As $p$ increases,
we see that the time $N_s$ also increases.
It should be mentioned that the $N_s$ is the time when the solution $u_n$ begins to nonlinear increases.


\section{Topic of lifespan}
\begin{lem}\label{lem:4}
	Let be $N \geq 2$.
	Assume that
	\begin{gather*}
		U_{n+1}	\geq C \sum_{l = N}^{n}\sum_{m = N}^{l}m^{-p}(m-N)^q,
		\quad\mbox{for}\quad
		n \geq N.
	\end{gather*}
	where $p, q$ are positive constants.
	Then,
	\begin{gather*}
		U_{n+1}
		\geq \frac{C}{(q+1)(q+2)} n^{-p} (n-N)^{q+2}.
	\end{gather*}
\end{lem}

\begin{proof}
For any $n \geq N$,
\begin{align*}
	U_{n + 1}
	& \geq C \sum_{l = N}^{n}\sum_{m = N}^{l}m^{-p}(m-N)^q
	\geq C n^{-p} \sum_{l = N}^{n} \sum_{m = N}^{l}(m-N)^q\\
	& \geq \frac{C}{q+1} n^{-p} \sum_{l = N}^{n} (l-N)^{q+1}
	\geq \frac{C}{(q+1)(q+2)} n^{-p} (n-N)^{q+2}.
\end{align*}

Therefore,
\begin{gather}
	\label{eq:lem-1}
	U_{n+1}
	\geq \frac{C}{(q+1)(q+2)} n^{-p} (n-N)^{q+2}.
\end{gather}

\end{proof}

\subsection{Proof for order of lifespan}

\begin{proof}

	For $n \geq N_1 := \max\{2N_0 + 1, R + 1\}$,
	\begin{gather*}
		U_n \geq \varepsilon \frac{I}{2}n,
		\quad
		U_{n+1} - 2U_{n} + U_{n-1}
		\geq \frac{\delta^{3-p}}{2^{(3d+1)(p-1)}}n^{-(d+1)(p-1)}(U_{n})^p.
	\end{gather*}
	Here, assume that, for $n \geq N_1$,
	\begin{gather}
		\label{pre-both-log}
		U_n \geq D_j \left(\frac{I}{2} \varepsilon \right)^{p^{j}} n^{-a_j}(n - N_1)^{b_j}.\\
		D_0 = 1,
		\quad
		a_0 = -1,
		\quad
		b_0 = 0.\label{ams-ini}
	\end{gather}
	Then,
	\begin{align*}
		U_{n+1} - 2U_{n} + U_{n-1}
		&\geq \frac{\delta^{3-p}}{2^{(3d+1)(p-1)}}n^{-(d+1)(p-1)}(U_{n})^p\\
		&\geq \frac{\delta^{3-p}}{2^{(3d+1)(p-1)}}n^{-(d+1)(p-1)}\left(D_j \left(\frac{I}{2} \varepsilon \right)^{p^{j}} n^{-a_j}(n - N_1)^{b_j}\right)^p\\
		&= (D_{j})^p\frac{\delta^{3-p}}{2^{(3d+1)(p-1)}} \left(\frac{I}{2} \varepsilon \right)^{p^{j+1}} n^{-(d+1)(p-1) - pa_j}(n - N_1)^{pb_j},
	\end{align*}
	\begin{gather*}
		U_{n+1}
		\geq (D_{j})^p\frac{\delta^{3-p}}{2^{(3d+1)(p-1)}} \left(\frac{I}{2} \varepsilon \right)^{p^{j+1}} \sum_{l=N_1}^{n} \sum_{m=N_1}^{l} m^{-(d+1)(p-1) - pa_j}(m - N_1)^{pb_j}.
	\end{gather*}
	For $n > 2N_0 + 4$,
	\begin{align*}
		U_n
		&\geq
		(D_{j})^p\frac{\delta^{3-p}}{2^{(3d+1)(p-1)-a_{j+1}+3b_{j+1}+5}}
		\left(\frac{I}{2} \varepsilon \right)^{p^{j+1}}
		n^{-(d+1)(p-1) - pa_j}(n - N_0)^{pb_j + 2}\notag\\
		&\geq
		(D_{j})^p\frac{\delta^{3-p}}{2^{(3d+1)(p-1)+5+3b_{j+1}}}
		\left(\frac{I}{2} \varepsilon \right)^{p^{j+1}}
		n^{-(d+1)(p-1) - pa_j}(n - N_0)^{pb_j + 2}.
	\end{align*}
	So we can define the sequences
	$\{D_j\}, \{a_j\}, \{b_j\}$ by
	\begin{gather}
		\left.
		\begin{array}{ll}
			D_{j+1} = \d (D_{j})^p\frac{\delta^{3-p}}{2^{(3d+1)(p-1)+5+3b_{j+1}}},
			\vspace{1ex}\\
			a_{j+1} = (d+1)(p-1) + pa_j,
			\quad
			b_{j+1} = pb_j + 2.
		\end{array}
		\right\}
		\label{ams-ine}
	\end{gather}
	It follows from (\ref{ams-ini}) and (\ref{ams-ine}) that
	\begin{gather*}
		a_j = p^j d - (d+1),
		\quad
		b_j = p^j\frac{2}{p-1} - \frac{2}{p-1},
	\end{gather*}

	If we employ the inequality
	\begin{gather*}
		b_j \leq p^j \frac{2}{p-1}
	\end{gather*}
	in (\ref{ams-ine}), we have
	\begin{gather*}
		D_{j+1} \geq (D_j)^p \frac{C_1}{2^{6p^{j+1}/(p-1)}}
		\quad\mbox{where}\quad
		C_1 = \frac{\delta^{3-p}}{2^{(3d+1)(p-1) + 5}}.
	\end{gather*}
	From above inequality,
	\begin{align*}
		\log D_j
		&\geq p\log D_{j-1} + \log C_1 + p^{j}\log (2^{6/(p-1)})\\
		&\geq \cdots\\
		&\geq \frac{p^{j} - 1}{p - 1}\log C_1 + jp^j \log (2^{6/(p-1)})\\
		&\geq \frac{p^{j}}{p - 1}\log C_1 + p^j \log (2^{6/(p-1)}).
	\end{align*}

	Here, taking the logarithm of both sides of (\ref{pre-both-log}),
	for any $j \in \N$,
	we get
	\begin{align*}
		U_n
		&= \exp (\log U_n)\\
		&\geq \exp\left(\log D_j + p^j \log \left(\frac{I}{2}\varepsilon\right) -a_j \log n + b_j \log (n - N_0)\right)\\
		&\geq \exp\left(\log D_j + p^j \log \left(\frac{I}{2}\varepsilon\right) -a_j \log n + b_j \log \left(\frac{n}{2}\right)\right)
		\quad \because n > 2N_0\\
		&\geq \exp\left(\frac{p^{j}}{p - 1}\log C_1 + p^j \log (2^{6/(p-1)}) + p^j \log \left(\frac{I}{2}\varepsilon\right) -a_j \log n + b_j \log \left(\frac{n}{2}\right)\right)\\
		&\geq \exp\left(\frac{p^{j}}{p - 1}\log C_1 + p^j \log (2^{6/(p-1)}) + p^j \log \left(\frac{I}{2}\varepsilon\right)\right. \\
		&\hspace{50pt} \left.-(p^j d - (d+1)) \log n + (p^j\frac{2}{p-1} - \frac{2}{p-1}) \log \left(\frac{n}{2}\right)\right)\\
		&\geq n^{d+1}\left(\frac{n}{2}\right)^{-2/(p-1)}\exp\left(\frac{p^{j}}{p - 1}\log C_1 + p^j \log (2^{6/(p-1)}) + p^j \log \left(\frac{I}{2}\varepsilon\right)\right. \\
		&\hspace{50pt} \left.-p^j d \log n + p^j\frac{2}{p-1}\log \left(\frac{n}{2}\right)\right)\\
		&\geq n^{d+1}\left(\frac{n}{2}\right)^{-2/(p-1)}
		\exp\left(p^{j} \log \left(C_1^{1/(p - 1)} 2^{6/(p-1)} \left(\frac{I}{2}\varepsilon\right) n^{-d} \left(\frac{n}{2}\right)^{2/(p-1)}\right)\right)
	\end{align*}
	If $C_1^{1/(p - 1)} 2^{4/(p-1)-1} I\varepsilon n^{-d + 2/(p-1)} > 1$,
	then $U_n \to \infty$ by $j \to \infty$.
	Therefore we get the upper bound $N$
	\begin{gather*}
		C_1^{1/(p - 1)} 2^{4/(p-1)-1} I\varepsilon N^{-d + 2/(p-1)} < 1,\\
		N^{-(d(p-1) - 2)/(p-1)} < C_1^{-1/(p - 1)} 2^{-4/(p-1)+1}(I\varepsilon)^{-1},\\
		N < (C_1^{-1/(p - 1)} 2^{-4/(p-1)+1}I ) ^{-(p-1)/(d(p-1) - 2)}\varepsilon^{(p-1)/(d(p-1) - 2)}.
	\end{gather*}
\end{proof}

\fi

\section*{Acknowledgments}
The first author is partially supported by JST SPRING (No. JPMJSP2119).
The second author is partially supported by  the Grant-in-Aid for Scientific Research (C),
(No. 25K07093), Japan Society for the Promotion of Science.

The authors are grateful to Professors Hideo Kubo, Satoshi Masaki, Keisuke Matsuya, Takiko Sasaki, Hiroyuki Takamura 
and Tetsuji Tokihiro for their useful comments.


\begin{thebibliography}{99}
\bibitem{D2015} M.D'Abbicco, \lq\lq The Threshold of Effective Damping for Semilinear Wave Equations'', Mathematical Methods in Applied Sciences, 38, (2015).
\bibitem{DLR} M.D'Abbicco, S.Lucente, M.Reissig, ``A shift in the Strauss exponent for semilinear wave equations with a not effective damping'', J. Differential Equations 259 (2015), 5040–5073.
\bibitem{K1980} T.Kato, \lq\lq Blow up of solutions of some nonlinear hyperbolic equations'', Comm. Pure Appl. Math, 33(1980), 501-505.
\bibitem{IKTW} T.Imai, M.Kato, H.Takamura and K.Wakasa, \lq\lq The lifespan of solutions of semilinear wave equations with the scale-invariant damping in two space dimensions'', J. Differential Equations, 269, 8387--8424.
\bibitem{KTW} M.Kato, H.Takamura and K.Wakasa, \lq\lq The lifespan of solutions of semilinear wave equations with the scale-invariant damping in one space dimension'', Differential Integral Equations 32, (2019), 659-678.
\bibitem{M2012} K.Matsuya, ``A blow-up theorem for a discrete semilinear wave equation'', Journal of Difference Equations and Applications, 19(3), 457–465, 2012.
\bibitem{Wakasa} K.Wakasa, \lq\lq The lifespan of solutions to semilinear damped wave equations in one space dimension", Communications on Pure and Applied Analysis, 15(4), (2016), 1265-1283.
\bibitem{wakasugi} Y.Wakasugi, ``Critical Exponent for the Semilinear Wave Equation with Scale Invariant Damping'', Fourier Analysis, 375–390, 2014.
\end{thebibliography}
\end{document}